\documentclass{jonasart}

\usepackage[shortlabels]{enumitem}
\usepackage{xltabular}

\newcolumntype{C}[1]{>{\centering\arraybackslash}p{#1}}

\title{Skew braces with no proper left ideals}

\authors{Cindy (Sin Yi) Tsang}

\authorinfo{Ochanomizu University,  Japan}{tsang.sin.yi@ocha.ac.jp}

\abstract{A skew brace $A = (A,\cdot,\circ)$ is said to be \textit{left-simple} if $A\neq1$ and it has no left ideal other than $1$ and $A$. The purpose of this paper is to give a partial classification of the finite left-simple skew braces. A result of Stefanello and Trappeniers implies that finite left-simple skew braces correspond precisely to minimal Hopf--Galois structures on finite Galois extensions of fields.}

\keywords{skew brace, left ideal, minimal Hopf--Galois structure.}

\msc{20N99 (primary); 16Y99, 12F10 (secondary)}

\acknowledgments{This work is supported by JSPS KAKENHI 24K16891. The author would like to thank the referees for their valuable suggestions.}

\VOLUME{1}
\ISSUE{1}
\NUMBER{5}
\DOI{https://doi.org/10.46298/jonas.17536}
\editinfo{February 18, 2026}{May 30, 2026}{Vicent Pérez-Calabuig}

\begin{document}
	\section{Introduction}

	A \textit{skew brace} is a~set $A = (A,\cdot,\circ)$ equipped with two group operations $\cdot$ and $\circ$ such that the so-called brace relation
	\[
		a\circ (b\cdot c) = (a\circ b)\cdot a^{-1}\cdot (a\circ c)
	\]
	holds for all $a,b,c\in A$. For any $a\in A$, we write $a^{-1}$ and $\overline{a}$, respectively, for the inverses of $a$ in the groups $(A,\cdot)$ and $(A,\circ)$. We easily check that $(A,\cdot)$ and $(A,\circ)$ share the same identity $1$. For each $a\in A$, it is straightforward to verify that the map
	\[
		\lambda_a: A \longrightarrow A;\quad \lambda_a (x) = a^{-1}\cdot (a\circ x)
	\]
	is an automorphism of $(A,\cdot)$, and it is also not hard to show that
	\begin{equation}\label{eqn:lambda map}
	    \lambda: (A,\circ)\longrightarrow \mathrm{Aut}(A,\cdot);
		\quad
		a \longmapsto \lambda_a
	\end{equation}
	is a~group homomorphism. A \textit{left ideal} of $A$ is a~subgroup $I$ of $(A,\cdot)$ that is invariant under the action of $\lambda$, namely $\lambda_a (I)\subseteq I$ for all $a\in A$. Note that $I$ is a~subgroup of $(A,\circ)$ in this case. We introduce a~new definition.

	\begin{definition}
		We shall say that a~skew brace $A = (A,\cdot,\circ)$ is \textit{left-simple} if $A\neq 1$ and it has no left ideal other than $1$ and $A$.
	\end{definition}

	Analogous to the Classification of Finite Simple Groups, it is natural to ask for a~classification of the finite left-simple skew braces. The purpose of this paper is to give a~partial classification.

	Let $A= (A,\cdot,\circ)$ be any finite left-simple skew brace. The \textit{additive group} of $A$, namely $(A,\cdot)$, must be characteristically simple because its characteristic subgroups are all left ideals of $A$. Hence, we have $(A,\cdot)\simeq T^n$ for some finite simple group $T$ and $n\in \mathbb{N}$. We are able to prove a~complete classification when $T$ is abelian and when $n=1$. We give a~few fairly restrictive conditions that $A$ must satisfy when $T$ is non-abelian and $n\geq 2$.

	\begin{theorem}\label{thm1}
	    Let $A=(A,\cdot,\circ)$ be a~finite skew brace such that $(A,\cdot)\simeq C_p^n$ for some prime $p$ and $n\in \mathbb{N}$. Then $A$ is left-simple if and only if $n=1$.
	\end{theorem}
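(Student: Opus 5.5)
The easy direction comes first. If $n=1$, then $(A,\cdot)\simeq C_p$ has prime order, so its only subgroups are $1$ and $A$. Since $A\neq 1$, the skew brace $A$ is left-simple, whatever the operation $\circ$ is.

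For the converse, assume $n\geq 2$. I will exhibit a nontrivial proper subgroup of $(A,\cdot)=V\simeq \mathbb{F}_p^n$ that is invariant under the image $G=\lambda(A,\circ)\leq \mathrm{GL}_n(\mathbb{F}_p)$. Such a subgroup is exactly a proper nonzero $G$-submodule of $V$. The key object is the fixed-point subspace
\[
V^P=\{x\in V:\lambda_a(x)=x \text{ for all } a\in P\},
\]
where $P$ is a Sylow $p$-subgroup of $(A,\circ)$. Since $|(A,\circ)|=p^n$, the group $(A,\circ)$ is itself a $p$-group, so $P=(A,\circ)$. Consequently $G=\lambda(A,\circ)$ is a $p$-group acting linearly on a nonzero $\mathbb{F}_p$-space. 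The standard orbit-counting argument then applies: every non-fixed orbit has size divisible by $p$, so $|V^G|\equiv|V|\equiv 0\pmod p$. Since $0\in V^G$, this gives $V^G\neq 0$.

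The space $V^G$ consists of the elements $x$ with $\lambda_a(x)=x$ for all $a$. Any subgroup of $V^G$ is trivially $\lambda$-invariant, so I pick any subgroup $I\leq V^G$ of order $p$, for instance the subgroup generated by a nonzero fixed vector. Then $I$ is a left ideal of order $p$. Because $n\geq 2$, we have $p<p^n$, so $1\neq I\neq A$ and $A$ is not left-simple.

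There is no real obstacle. The only point that needs care is noticing that $(A,\circ)$ has the same order $p^n$ as $(A,\cdot)$, so that $\lambda(A,\circ)$ is a $p$-group. Then the fixed-point lemma for $p$-groups acting on $p$-groups produces a one-dimensional invariant subspace, and this is precisely a left ideal of order $p$.
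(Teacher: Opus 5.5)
Your proof is correct and is essentially the paper's argument. The paper views $\lambda$ as an $\mathbb{F}_p$-representation of the $p$-group $(A,\circ)$ and cites the standard fact that its only irreducible representation is the trivial one-dimensional one; your orbit-counting argument that $V^G\neq 0$ is a direct, self-contained proof of that same fact (the detour through a Sylow subgroup is unnecessary but harmless).
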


	\begin{proof}
		We may regard the group homomorphism
		\[
			\lambda: (A,\circ) \longrightarrow \mathrm{Aut}(A,\cdot)\simeq \mathrm{GL}_n (\mathbb{F}_p )
		\]
		as a~linear representation of the finite $p$-group $(A,\circ)$ over the field $\mathbb{F}_p$ that has characteristic $p$. The left ideals of the skew brace $A$ then coincide with the $\lambda$-invariant subspaces of the vector space $(A,\cdot)\simeq \mathbb{F}_p^n$. This means that $A$ is left-simple if and only if $\lambda$ is irreducible. But it is well-known in modular representation theory (e.g. see~\cite[Proposition~6.2.1]{Webb}) that $\lambda$ is irreducible if and only if it is the trivial representation. This proves the theorem.
	\end{proof}

	The following corollary is immediate from Theorem~\ref{thm1}.

	\begin{corollary}\label{cor}
	    Up to isomorphism, the finite left-simple skew braces with an abelian additive group are exactly $\mathbb{F}_p =(\mathbb{F}_p,+,+)$, where $p$ is any prime and $+$ is the usual addition.
	\end{corollary}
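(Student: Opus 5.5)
The corollary should follow from Theorem~\ref{thm1} once we pin down the additive group and then the multiplicative operation.

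Let $A=(A,\cdot,\circ)$ be a finite left-simple skew brace with $(A,\cdot)$ abelian. As noted above, $(A,\cdot)$ is characteristically simple, so $(A,\cdot)\simeq T^n$ for a finite simple group $T$. Since $(A,\cdot)$ is abelian and $A\neq 1$, the group $T$ is abelian, hence $T\simeq C_p$ for a prime $p$ and $(A,\cdot)\simeq C_p^n$. Theorem~\ref{thm1} then gives $n=1$, so $(A,\cdot)\simeq C_p$ and $|A|=p$.

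Next we identify $\circ$. The group $(A,\circ)$ has order $p$, so it is a $p$-group. The homomorphism $\lambda:(A,\circ)\to \mathrm{Aut}(A,\cdot)\simeq C_{p-1}$ therefore has image of order dividing both $p$ and $p-1$, so $\lambda$ is trivial. Then $a^{-1}\cdot(a\circ x)=x$ for all $a,x$, which gives $a\circ x=a\cdot x$. Identifying $(A,\cdot)$ with $(\mathbb{F}_p,+)$, we get $A\simeq(\mathbb{F}_p,+,+)$. This step is essentially the content of the proof of Theorem~\ref{thm1}, namely that a left-simple $A$ forces $\lambda$ to be trivial, but the direct order argument avoids citing it.

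Conversely, $(\mathbb{F}_p,+,+)$ is a skew brace (a trivial brace) with abelian additive group $C_p$. By Theorem~\ref{thm1} with $n=1$ it is left-simple; one can also see this directly, since $C_p$ has no proper nontrivial subgroups. Different primes give braces of different orders, so these braces are pairwise non-isomorphic.

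No step is a real obstacle. The one point needing care is the reduction from an abelian characteristically simple group to $C_p^n$, which is standard.
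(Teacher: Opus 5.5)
Your proposal is correct and takes essentially the paper's route. The paper calls the corollary immediate from Theorem~\ref{thm1}: reduce to $(A,\cdot)\simeq C_p^n$, get $n=1$, and note that $\lambda$ is trivial, so $A$ is the trivial brace on $C_p$. Your argument that $\lambda$ is trivial (its image has order dividing both $p$ and $|\mathrm{Aut}(C_p)|=p-1$) simply fills in that last step, which the proof of Theorem~\ref{thm1} also gives.
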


	\begin{theorem}\label{thm2}
	    Let $A=(A,\cdot,\circ)$ be a~finite skew brace such that $(A,\cdot)\simeq T^n$ for some non-abelian simple group $T$ and $n\in \mathbb{N}$. It is well-known that
		\[
			\mathrm{Aut}(T^n ) = \mathrm{Aut}(T)\wr S_n = \mathrm{Aut}(T)^n\rtimes S_n,
		\]
		where $S_n$ denotes the symmetric group on $n$ letters.
		\begin{enumerate}[(a)]
			\item In the case $n = 1$, we have $A$ is left-simple if and only if $A$ is an almost trivial skew brace, namely, $a\circ b = b\cdot a$ for all $a,b\in A$.

			\item In the case $n \geq 2$, if $A$ is left-simple, then $(A,\circ)$ is isomorphic to some subgroup of $\mathrm{Aut}(T^n )$ whose projection onto $S_n$ is a~transitive subgroup, and $\mathrm{Im}(\lambda)$ intersects trivially with $\mathrm{Inn}(A,\cdot)$.
		\end{enumerate}
	\end{theorem}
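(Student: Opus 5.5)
The plan is to work throughout with the homomorphism $\lambda$ from \eqref{eqn:lambda map} and to extract left ideals from two pieces of $\lambda$-data: its kernel, and the part of its image lying in $\mathrm{Inn}(A,\cdot)$. Three preliminary observations will be used in both parts.

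\emph{First observation.} $\ker\lambda$ is a left ideal of $A$. This is standard (e.g.\ Guarnieri--Vendramin). If I have to check it by hand, I would combine $\lambda_{b\circ a}=\lambda_b\lambda_a$ with $b\circ a=b\cdot\lambda_b(a)$ to verify that $\lambda_b(\ker\lambda)\subseteq\ker\lambda$.

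\emph{Second observation.} Put
\[
H=\{t\in A : \mathrm{conj}_t\in\mathrm{Im}(\lambda)\}.
\]
This is a subgroup of $(A,\cdot)$. For $\varphi\in\mathrm{Aut}(A,\cdot)$ we have $\varphi\,\mathrm{conj}_t\,\varphi^{-1}=\mathrm{conj}_{\varphi(t)}$, so $\lambda_a(H)\subseteq H$ for every $a$, and $H$ is a left ideal. Since $(A,\cdot)\simeq T^n$ is centreless, $t\mapsto\mathrm{conj}_t$ is injective. Hence $H=1$ exactly when $\mathrm{Im}(\lambda)\cap\mathrm{Inn}(A,\cdot)=1$, and $H=A$ exactly when $\mathrm{Inn}(A,\cdot)\le\mathrm{Im}(\lambda)$.

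\emph{Third observation.} If $\lambda$ is trivial, then every subgroup of $(A,\cdot)$ is a left ideal. Since $T^n$ has proper nontrivial subgroups, such an $A$ is not left-simple.

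\textbf{Part (b).} Assume $A$ is left-simple with $n\ge2$. By the first and third observations, $\ker\lambda=1$, so $(A,\circ)\simeq\mathrm{Im}(\lambda)\le\mathrm{Aut}(T^n)$.

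For transitivity, suppose the projection of $\mathrm{Im}(\lambda)$ to $S_n$ had an orbit $O\subsetneq\{1,\dots,n\}$. Then the product of the coordinate factors indexed by $O$ is a proper nontrivial $\lambda$-invariant subgroup, which is impossible.

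Finally, by the second observation $H=1$ or $H=A$. If $H=A$, then $\mathrm{Inn}(A,\cdot)\le\mathrm{Im}(\lambda)$, and comparing orders ($|\mathrm{Im}(\lambda)|\le|A|=|T|^n$) gives $\mathrm{Im}(\lambda)=\mathrm{Inn}(T^n)$. Its projection to $S_n$ is then trivial, contradicting transitivity for $n\ge2$. So $H=1$, which is the claimed trivial intersection.

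\textbf{Part (a), easy direction.} If $a\circ b=b\cdot a$, then $\lambda_a(x)=a^{-1}xa$. The left ideals are then exactly the normal subgroups of $T$, so $A$ is left-simple.

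\textbf{Part (a), converse.} Assume $A$ is left-simple with $n=1$. Again $\ker\lambda=1$.

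If $H=1$, then $\mathrm{Im}(\lambda)$ embeds in $\mathrm{Out}(T)$, which is solvable by the Schreier conjecture. Then $(A,\circ)\simeq\mathrm{Im}(\lambda)$ is solvable of order $|T|$. I would rule this out, either by invoking a known result that a skew brace with solvable multiplicative group has solvable additive group, or more directly by a Hall-subgroup or Sylow argument inside the holomorph. This gap should be filled before anything else.

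If $H=A$, then comparing orders gives $\mathrm{Im}(\lambda)=\mathrm{Inn}(T)$. So there is a bijection $f:A\to T$ with $\lambda_a=\mathrm{conj}_{f(a)}$, and $f$ is an isomorphism $(A,\circ)\to T$. Define $g(a)=a\cdot f(a)$. A direct computation using $a\circ b=a\cdot f(a)\,b\,f(a)^{-1}$ shows that $g:(A,\circ)\to T$ is a homomorphism. Since $(A,\circ)\simeq T$ is simple, $\ker g$ is $1$ or $A$. If $\ker g=A$, then $f(a)=a^{-1}$ and $A$ is almost trivial.

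\textbf{Main obstacle.} The hard step is excluding $\ker g=1$, where $a\mapsto a\cdot f(a)$ is an isomorphism. In that case I would write $\psi:=g\circ f^{-1}$, which is an automorphism of $T$. The multiplication then becomes $a\circ b=\psi(\cdot)$-twisted, and I would look for a proper nontrivial $\lambda$-invariant subgroup. The first candidate is the fixed-point subgroup of a suitable element. The second is the subgroup $\{a : g(a)=f(a)^{-1}\cdots\}$ obtained by comparing $f$ with $\psi$. I would try both to derive a contradiction.
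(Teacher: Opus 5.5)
Your proposal has a genuine gap in the ``first observation'': $\ker\lambda$ is \emph{not} a left ideal in general. Since $\ker\lambda$ is normal in $(A,\circ)$ and $c\circ b=c\cdot b$ for $c\in\ker\lambda$, put $c=b\circ a\circ\overline{b}$. Then $b\circ a=c\cdot b$, so $\lambda_b(\ker\lambda)=b^{-1}(\ker\lambda)\,b$. Hence $\ker\lambda$ is a left ideal exactly when it is normal in $(A,\cdot)$; it is always $\rho$-invariant, but not necessarily $\lambda$-invariant.

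For a concrete counterexample, take an exact factorization $G=XY$. The rule $(xy)\circ g=xgy$ ($x\in X$, $y\in Y$) defines a skew brace with $(G,\circ)\simeq X\times Y$ and $\lambda_{xy}=\mathrm{conj}(y^{-1})$. For $G=S_3$, $X=\langle(12)\rangle$, $Y=\langle(123)\rangle$ this gives $\ker\lambda=X$, which is not $\lambda$-invariant.

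The left ideal that does work is $\ker\rho$ (the paper's $J_2$). For (b) the repair is cheap. If $\ker\rho=A$, then $\lambda_a=\mathrm{conj}(a^{-1})$ and $\mathrm{Im}(\lambda)$ would have trivial projection to $S_n$; so $\ker\rho=1$ and $(A,\circ)\simeq\mathrm{Im}(\rho)$. Then $\rho_a\equiv\lambda_a\pmod{\mathrm{Inn}(A,\cdot)}$ shows that $\mathrm{Im}(\rho)$ has the same transitive projection as $\mathrm{Im}(\lambda)$. Your transitivity and trivial-intersection arguments are fine as written.

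For (a), however, the false lemma hides the entire difficulty. In your case $H=1$, $\mathrm{Im}(\lambda)$ embeds in $\mathrm{Out}(T)$, so $|\ker\lambda|\geq|T|/|\mathrm{Out}(T)|>1$. Thus $\ker\lambda$ is necessarily a nontrivial proper subgroup there, and ``$(A,\circ)\simeq\mathrm{Im}(\lambda)$'' is false (it would force $|T|\le|\mathrm{Out}(T)|$).

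What you actually have is $(A,\circ)\simeq\mathrm{Im}(\rho)$ together with a nontrivial solvable $\mathrm{Im}(\lambda)$ of order dividing $|\mathrm{Out}(T)|$. This is a core-free factorization $\Gamma=\mathrm{Im}(\lambda)\mathrm{Im}(\rho)$ of an almost simple group with a solvable factor. The paper excludes it only via the classification in \cite{AS} together with the arithmetic conditions (1)--(4), and nothing in your proposal replaces this. Your suggested filler is also false: the construction above applied to the exact factorization $A_5=A_4C_5$ gives a skew brace with solvable $(A,\circ)\simeq A_4\times C_5$ and $(A,\cdot)\simeq A_5$.

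In the case $H=A$, the order count does force $\ker\lambda=1$, so that branch survives. But your plan for $\ker g=1$ cannot work. When $\mathrm{Im}(\lambda)=\mathrm{Inn}(T)$, the $\lambda$-invariant subgroups are exactly the normal subgroups of $T$, so no proper nontrivial one exists. The contradiction must instead come from non-existence. The map $x\mapsto\psi(x)x^{-1}$ equals $f^{-1}$ and is therefore a bijection, so $\psi\in\mathrm{Aut}(T)$ is fixed-point-free. That is impossible for non-solvable $T$ by Rowley's CFSG-based theorem. This is essentially the content of Byott's result \cite{simple}, which the paper invokes at the outset to get $(A,\circ)\not\simeq T$.
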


	\begin{remark}
		The motivation of this paper came from the study of minimal Hopf--Galois structures on finite Galois extensions. In the language of Hopf--Galois theory, our results above yield a~partial classification of the minimal Hopf--Galois structures on finite Galois extensions (see Section~\ref{sec:minimal}).
	\end{remark}

	\begin{remark}
		A skew brace $A=(A,\cdot,\circ)$ is said to be \textit{simple} if $A\neq 1$ and it has no ideal other than $1$ and $A$. Recall that an \textit{ideal} of $A$ is a~left ideal $I$ of $A$ that is normal in both $(A,\cdot)$ and $(A,\circ)$. In this case, we can naturally define a~quotient skew brace structure on the set
		\[
			A/I = \{a\cdot I: a\in A\} = \{a\circ I:a\in A\}.
		\]
		Thus, ideals in a~skew brace are a~natural analog of normal subgroups in a~group, and simple skew braces are arguably a~closer analog of simple groups than left-simple skew braces. A classification of finite simple skew braces is desirable, but it seems to be out of reach at the moment. For example, in a~simple skew brace $A = (A,\cdot,\circ)$, it is possible that $(A,\cdot)$ is abelian~\cite{simple-brace}.
	\end{remark}

	\section{Preliminaries}

	Let $A = (A,\cdot,\circ)$ be a~skew brace. For each $a\in A$, the maps
	\begin{gather*}
		\lambda_a: A\longrightarrow A; \quad \lambda_a (x) = a^{-1}\cdot (a\circ x)\\
		\rho_a: A\longrightarrow A; \quad \rho_a (x) = (a\circ x) \cdot a^{-1}
	\end{gather*}
	are automorphisms of $(A,\cdot)$. Also, let
	\[
		\mathrm{conj}(a): A\longrightarrow A;
		\quad
		\mathrm{conj}(a)(x) = a \cdot x \cdot a^{-1}
	\]
	denote the inner automorphism of $(A,\cdot)$ induced by $a$, so that
	\begin{equation}\label{eqn:lambda}
	    \rho_a =\mathrm{conj}(a)\lambda_a, \quad
		\lambda_a = \mathrm{conj}(a^{-1})\rho_a, \quad
		\mathrm{conj}(a) = \rho_a\lambda_a^{-1}
	\end{equation}
	hold. It is well-known that
	\begin{gather*}
		\lambda: (A,\circ)\longrightarrow \mathrm{Aut}(A,\cdot); \quad a\longmapsto \lambda_a\\
		\rho: (A,\circ)\longrightarrow \mathrm{Aut}(A,\cdot); \quad a\longmapsto \rho_a
	\end{gather*}
	are group homomorphisms. Also let
	\[
		\mathrm{conj}: (A,\cdot) \longrightarrow \mathrm{Aut}(A,\cdot);
		\quad
		a~\longmapsto \mathrm{conj}(a)
	\]
	denote the natural group homomorphism.

	For the homomorphisms $\lambda$ and $\rho$, we consider the product
	\[
		\Gamma:= \mathrm{Im}(\lambda)\mathrm{Im}(\rho)
	\]
	of their images. The next proposition is essentially~\cite[Proposition 2.1]{Tsang} and it implies that $\Gamma$ is subgroup of $\mathrm{Aut}(A,\cdot)$ by the normality of $\mathrm{Inn}(A,\cdot)$.

	\begin{proposition}\label{prop:trifactor} We have the equalities
		\[
			\Gamma = \mathrm{Im}(\lambda)\mathrm{Inn}(A,\cdot )=\mathrm{Im}(\rho)\mathrm{Inn}(A,\cdot).
		\]
	\end{proposition}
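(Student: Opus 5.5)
The plan is to use only the three identities in \eqref{eqn:lambda} together with the fact that $\mathrm{Inn}(A,\cdot)$ is normal in $\mathrm{Aut}(A,\cdot)$. The key observation is that every $\rho_a$ differs from $\lambda_a$ by an inner automorphism, and vice versa. We then compare each of the three sets
\[
	\mathrm{Im}(\lambda)\mathrm{Im}(\rho),\quad \mathrm{Im}(\lambda)\mathrm{Inn}(A,\cdot),\quad \mathrm{Im}(\rho)\mathrm{Inn}(A,\cdot)
\]
directly by double inclusion.

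First, show that $\mathrm{Im}(\lambda)\mathrm{Inn}(A,\cdot)=\mathrm{Im}(\rho)\mathrm{Inn}(A,\cdot)$. By normality of $\mathrm{Inn}(A,\cdot)$, we have $\mathrm{Im}(\lambda)\mathrm{Inn}(A,\cdot) = \mathrm{Inn}(A,\cdot)\mathrm{Im}(\lambda)$, and similarly for $\rho$. Since $\rho_a = \mathrm{conj}(a)\lambda_a$, every $\rho_a$ lies in $\mathrm{Inn}(A,\cdot)\mathrm{Im}(\lambda)$. Likewise, $\lambda_a = \mathrm{conj}(a^{-1})\rho_a$ gives $\lambda_a \in \mathrm{Inn}(A,\cdot)\mathrm{Im}(\rho)$. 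Multiplying by inner automorphisms and using that $\mathrm{Inn}(A,\cdot)\mathrm{Im}(\lambda)$ is a subgroup then yields both inclusions. It is a subgroup because it is a product of a subgroup with a normal subgroup.

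Second, show that $\Gamma=\mathrm{Im}(\lambda)\mathrm{Im}(\rho)$ equals this common subgroup. For the inclusion $\subseteq$, note that $\lambda_a\rho_b \in \mathrm{Im}(\lambda)\cdot\mathrm{Inn}(A,\cdot)\mathrm{Im}(\lambda) = \mathrm{Im}(\lambda)\mathrm{Inn}(A,\cdot)$, using normality once more. For the reverse inclusion, take $\lambda_a\,\mathrm{conj}(b)$ and express $\mathrm{conj}(b)$ via the third identity: $\mathrm{conj}(b)=\rho_b\lambda_b^{-1}$. This is not yet in the shape $\lambda\cdot\rho$, because the factor $\lambda_b^{-1}$ sits on the wrong side. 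The fix is to write elements of $\mathrm{Inn}(A,\cdot)\mathrm{Im}(\lambda)$ in the other order. Normality gives $\mathrm{Im}(\lambda)\mathrm{Inn}(A,\cdot)=\mathrm{Inn}(A,\cdot)\mathrm{Im}(\lambda)$, so I would rather use the right-hand description. A general element is $\lambda_a\,\mathrm{conj}(b)=\lambda_a\rho_b\lambda_b^{-1}$, which is still awkward. Instead, write it as $\mathrm{conj}(b)\lambda_a$, note that $\mathrm{conj}(b)\lambda_a = \lambda_c\,\mathrm{conj}(b')$ with suitable elements, and use $\mathrm{conj}(b')=\rho_{b'}\lambda_{b'}^{-1}$ combined with the homomorphism property, $\lambda_c\lambda_{b'}^{-1}$ absorbed appropriately. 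The cleanest route is to pick the right element of $A$, as follows.

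The main obstacle is this reverse inclusion $\mathrm{Im}(\lambda)\mathrm{Inn}(A,\cdot)\subseteq \mathrm{Im}(\lambda)\mathrm{Im}(\rho)$. I would try the following. Given $\lambda_a\,\mathrm{conj}(x)$, choose $b\in A$ with $\lambda_b^{-1}$ cancelling, i.e.\ write
\[
	\lambda_a\,\mathrm{conj}(x)=\lambda_a\rho_x\lambda_x^{-1}=\lambda_a\lambda_x^{-1}\cdot(\lambda_x\rho_x\lambda_x^{-1}),
\]
and observe that $\lambda_x\rho_x\lambda_x^{-1}=\rho_{x}\cdot(\rho_x^{-1}\lambda_x\rho_x\lambda_x^{-1})$. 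Alternatively, and more promisingly, check directly from the definitions that $\lambda$ and $\rho$ commute elementwise in the sense $\lambda_a\rho_b=\rho_b\lambda_a$. This should follow from the brace relation, since $\lambda_a$ acts by left $\circ$-multiplication twisted by $a^{-1}$ while $\rho_b$ is its mirror. If that commutation holds, then $\mathrm{conj}(x)=\rho_x\lambda_x^{-1}=\lambda_x^{-1}\rho_x$. Hence $\lambda_a\,\mathrm{conj}(x)=\lambda_{a\circ\overline{x}}\rho_x\in\Gamma$, which finishes the proof. Verifying this commutation, or replacing it with the corresponding argument from \cite[Proposition 2.1]{Tsang}, is the step I expect to require care.
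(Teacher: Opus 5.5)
Your first two steps are correct: the equality $\mathrm{Im}(\lambda)\mathrm{Inn}(A,\cdot)=\mathrm{Im}(\rho)\mathrm{Inn}(A,\cdot)$ and the inclusion $\Gamma\subseteq\mathrm{Im}(\lambda)\mathrm{Inn}(A,\cdot)$ follow as you say. The gap is in the one nontrivial inclusion, $\mathrm{Im}(\lambda)\mathrm{Inn}(A,\cdot)\subseteq\Gamma$. You base it on $\lambda_a\rho_b=\rho_b\lambda_a$, and you actually use only the special case $\lambda_x\rho_x=\rho_x\lambda_x$. Even that special case is false in general. Since $\lambda_x^{-1}=\lambda_{\overline{x}}$ and $\lambda_{\overline{x}}(x)=\overline{x}^{-1}\cdot(\overline{x}\circ x)=\overline{x}^{-1}$, you get
\[
\lambda_x^{-1}\rho_x=\lambda_x^{-1}\,\mathrm{conj}(x)\,\lambda_x=\mathrm{conj}\bigl(\lambda_x^{-1}(x)\bigr)=\mathrm{conj}(\overline{x}^{-1}),\qquad \rho_x\lambda_x^{-1}=\mathrm{conj}(x).
\]
So $\lambda_x$ and $\rho_x$ commute exactly when $x\cdot\overline{x}\in Z(A,\cdot)$. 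For a concrete failure, fix a transposition $t\in S_3$ and put on $A=S_3\times\{0,1\}$ the operations $(n,i)\cdot(n',j)=(nn',i+j)$ and $(n,i)\circ(n',j)=(n\,t^{i}n't^{-i},i+j)$, indices mod $2$. This is the standard direct-product/semidirect-product skew brace. For $x=(c,1)$ with $c$ a $3$-cycle, $\overline{x}=x$ and $x\cdot\overline{x}=(c^{2},0)\notin Z(A,\cdot)$. So your final formula $\lambda_a\,\mathrm{conj}(x)=\lambda_{a\circ\overline{x}}\rho_x$ is wrong. Deferring to the cited reference does not supply the missing argument.

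The missing idea is small. You noticed that in $\mathrm{conj}(b)=\rho_b\lambda_b^{-1}$ the $\lambda$-factor sits on the wrong side; the remedy is to invert: $\mathrm{conj}(c^{-1})=\mathrm{conj}(c)^{-1}=\lambda_c\rho_c^{-1}$. As $c$ ranges over $A$, this produces every inner automorphism. Hence
\[
\mathrm{conj}(c^{-1})\,\rho_b=\lambda_c\,\rho_{\overline{c}\circ b}\in\Gamma,
\]
so $\mathrm{Im}(\rho)\mathrm{Inn}(A,\cdot)=\mathrm{Inn}(A,\cdot)\mathrm{Im}(\rho)\subseteq\Gamma$. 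Together with your first two steps, this closes the cycle $\Gamma\subseteq\mathrm{Im}(\lambda)\mathrm{Inn}(A,\cdot)\subseteq\mathrm{Im}(\rho)\mathrm{Inn}(A,\cdot)\subseteq\Gamma$, which is exactly how the paper argues. Its third identity, $\rho_a\,\mathrm{conj}(b)=\mathrm{conj}(\rho_a(b)^{-1})^{-1}\rho_a=\lambda_{\rho_a(b)^{-1}}\rho_{\rho_a(b)^{-1}}^{-1}\rho_a$, is this trick. Equivalently, the first display shows every inner automorphism equals $\mathrm{conj}(\overline{x}^{-1})=\lambda_x^{-1}\rho_x$ for some $x$, since $x\mapsto\overline{x}^{-1}$ is a bijection. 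That gives the corrected form of your formula, $\lambda_a\,\mathrm{conj}(\overline{x}^{-1})=\lambda_{a\circ\overline{x}}\rho_x\in\Gamma$.
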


	\begin{proof}
		For any $a,b\in A$, a~simple calculation using~\eqref{eqn:lambda} yields that
		\begin{align*}
			\lambda_a\cdot \rho_b
			& = \lambda_a\mathrm{conj}(b)\lambda_b =\lambda_{a}\lambda_b\cdot\mathrm{conj}(\lambda_{b}^{-1}(b)),\\
			\lambda_a\cdot\mathrm{conj}(b)
			& = \mathrm{conj}(a^{-1})\rho_a\mathrm{conj}(b)= \rho_a\cdot \mathrm{conj}(\rho_a^{-1}(a^{-1})b),\\
			\rho_a\cdot\mathrm{conj}(b)
			& = \mathrm{conj}(\rho_a (b)^{-1})^{-1}\rho_a = \lambda_{\rho_a (b)^{-1}}\cdot \rho_{\rho_a (b)^{-1}}^{-1}\rho_a.
		\end{align*}
		The equalities now follow.
	\end{proof}

	Using the homomorphisms $\lambda$ and $\rho$, there are two left ideals
	\[
		J_1:= \mathrm{conj}^{-1}(\mathrm{Im}(\lambda)),\quad
		J_2:= \ker(\rho)
	\]
	that we can construct. They are implicit in~\cite[Proof of Theorem A]{soluble}.

	\begin{proposition}\label{prop:ideals}
	    Both of $J_1$ and $J_2$ are left ideals of $A$.
	\end{proposition}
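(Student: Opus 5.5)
For each of $J_1$ and $J_2$ I will check two things: that it is a subgroup of $(A,\cdot)$, and that it is invariant under every $\lambda_a$. The only tools needed are the relations~\eqref{eqn:lambda}, the fact that $\lambda$ and $\rho$ are homomorphisms from $(A,\circ)$, and the standard identity $\varphi\,\mathrm{conj}(x)\,\varphi^{-1}=\mathrm{conj}(\varphi(x))$ for $\varphi\in\mathrm{Aut}(A,\cdot)$.

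\emph{The set $J_1$.} It is a subgroup of $(A,\cdot)$, because it is the preimage of the subgroup $\mathrm{Im}(\lambda)$ under the homomorphism $\mathrm{conj}:(A,\cdot)\to\mathrm{Aut}(A,\cdot)$. For invariance, take $x\in J_1$, so $\mathrm{conj}(x)=\lambda_b$ for some $b\in A$. Then for any $a\in A$,
\[
\mathrm{conj}(\lambda_a(x))=\lambda_a\,\mathrm{conj}(x)\,\lambda_a^{-1}=\lambda_a\lambda_b\lambda_{\overline{a}}=\lambda_{a\circ b\circ\overline{a}}\in\mathrm{Im}(\lambda).
\]
Hence $\lambda_a(x)\in J_1$.

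\emph{The set $J_2$.} It is a subgroup of $(A,\circ)$, being a kernel, but we need a subgroup of $(A,\cdot)$, so this is the key step. I will prove that on $J_2$ the two operations agree.

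For $x\in J_2$ we have $\rho_x=\mathrm{id}$. Using~\eqref{eqn:lambda}, this gives $\lambda_x=\mathrm{conj}(x^{-1})$, so $x\circ y=x\cdot\lambda_x(y)=y\cdot x$ for all $y\in A$.

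\textbf{Closure.} For $x,y\in J_2$ this gives $x\cdot y=y\circ x\in J_2$.

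\textbf{Inverses.} From $x\circ x^{-1}=x^{-1}\cdot x=1$ we get $x^{-1}=\overline{x}\in J_2$.

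So $J_2$ is a subgroup of $(A,\cdot)$.

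\textbf{Invariance.} Take $x\in J_2$ and $a\in A$. Since $\lambda_a(x)=a^{-1}\cdot(a\circ x)$, we have $a\circ x=a\cdot\lambda_a(x)$. Also $\rho_a(x)=(a\circ x)\cdot a^{-1}=\mathrm{conj}(a)(\lambda_a(x))$. I will show directly that $\rho_{\lambda_a(x)}=\mathrm{id}$, i.e.\ $\lambda_a(x)\circ y=y\cdot\lambda_a(x)$ for all $y$.

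First, $J_2$ is normal in $(A,\circ)$, so $a\circ x\circ\overline{a}\in J_2$. Next, because $\rho_x=\mathrm{id}$,
\[
a\circ x\circ\overline{a}=\rho_a\rho_x\rho_a^{-1}\text{-related}:\quad a\circ(x\circ\overline a)=a\circ(\overline a\cdot x)=(a\circ\overline a)\cdot a^{-1}\cdot(a\circ x)=a^{-1}\cdot(a\circ x)=\lambda_a(x).
\]
So $\lambda_a(x)=a\circ x\circ\overline a\in J_2$, which finishes the proof.

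The main obstacle is the subgroup property of $J_2$ in $(A,\cdot)$. The identity $x\circ y=y\cdot x$ for $x\in J_2$ resolves it, and that same identity is what yields invariance.
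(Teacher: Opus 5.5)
Your proof is correct and follows essentially the same route as the paper: the $J_1$ argument via $\mathrm{conj}(\lambda_a(x))=\lambda_a\,\mathrm{conj}(x)\,\lambda_a^{-1}\in\mathrm{Im}(\lambda)$ is identical, and for $J_2$ the paper likewise uses $x\circ y=y\cdot x$ (from $\rho_x=\mathrm{id}$) to get a subgroup of $(A,\cdot)$, then the brace relation to show $a\circ x\circ\overline{a}=\lambda_a(x)$, and concludes by normality of $\ker(\rho)$ in $(A,\circ)$. The only blemishes are cosmetic: delete the unused remark about $\rho_a(x)$ and the meaningless phrase ``$\rho_a\rho_x\rho_a^{-1}$-related''.
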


	\begin{proof}
		Clearly $J_1$ is a~subgroup of $(A,\cdot)$. Let $j\in J_1$ and write $\mathrm{conj}(j) = \lambda_b$ for some $b\in A$. For any $a\in A$, it then follows that
		\begin{align*}
			\mathrm{conj}(\lambda_a (j)) = \lambda_a\mathrm{conj}(j)\lambda_a^{-1} = \lambda_{a}\lambda_b\lambda_a^{-1}\in \mathrm{Im}(\lambda),
		\end{align*}
		which yields $\lambda_a (j)\in J_1$. This shows that $J_1$ is a~left ideal of $A$.

		Clearly $J_2$ is a~subgroup of $(A,\circ)$. For any $a\in A$ and $j\in J_2$, we have
		\[
			j \circ a~= \rho_j (a)\cdot j = a~\cdot j,
		\]
		and so $J_2$ is also a~subgroup of $(A,\cdot)$. The above further implies that
		\[
			a\circ j \circ\overline{a} = a\circ (\overline{a}\cdot j) = (a\circ \overline{a})\cdot a^{-1}\cdot (a\circ j) = \lambda_a (j).
		\]
		Since $J_2$ is in fact a~normal subgroup of $(A,\circ)$, we obtain $\lambda_a (j)\in J_2$. This proves that $J_2$ is a~left ideal of $A$.
	\end{proof}

	\section{Proof of Theorem~\ref{thm2}}

	In this section, let $A=(A,\cdot,\circ)$ be a~finite skew brace with $(A,\cdot)\simeq T^n$, where $T$ is a~non-abelian simple group and $n\in\mathbb{N}$. Also, let
	\[
		\Gamma:=\mathrm{Im}(\lambda)\mathrm{Im}(\rho),\quad J_1:=\mathrm{conj}^{-1}(\mathrm{Im}(\lambda)),\quad J_2:=\ker(\rho)
	\]
	be defined as in the previous section. By Proposition~\ref{prop:trifactor}, we have
	\begin{equation}\label{eqn:Gamma}
	    \mathrm{Inn}(A,\cdot )\leq \Gamma\leq \mathrm{Aut}(A,\cdot).
	\end{equation}
	By Proposition~\ref{prop:ideals}, we know that $J_1$ and $J_2$ are left ideals of $A$.

	\subsection{The case when $n=1$}

	First, suppose that $A$ is almost trivial. Then the left ideals of $A$ are exactly the normal subgroups of $(A,\cdot)$. But $(A,\cdot)$ is non-abelian simple, so clearly $A$ is left-simple.

	Conversely, suppose that $A$ is left-simple. Then $A$ is non-trivial, namely, $a\circ b\neq a\cdot b$ for some $a,b\in A$, for otherwise any subgroup of $(A,\cdot)$ would be a~left ideal of $A$. We suppose for contradiction that $A$ is not almost trivial. Since $A$ is neither trivial nor almost trivial, we obtain from~\cite[Theorem 1.1]{simple} that $(A,\circ)$ is not simple (the result of~\cite{simple} is stated in terms of Hopf--Galois structures, but using~\cite[Theorem 3.1]{ST}, for example, to translate it into the language of skew braces, it says that a~finite skew brace with a~non-abelian simple multiplicative group is either trivial or almost trivial). We then have $(A,\circ)\not\simeq (A,\cdot)$. By order consideration, it follows that
	\begin{equation}\label{eqn:rule out}
	    \mathrm{Im}(\lambda)\not\supseteq \mathrm{Inn}(A,\cdot)
		\quad \mbox{and} \quad
		\mathrm{Im}(\rho)\not\supseteq \mathrm{Inn}(A,\cdot).
	\end{equation}
	Indeed, note that $\mathrm{Inn}(A,\cdot)\simeq (A,\cdot)$ has order $|A|$, while
	\[
		\mathrm{Im}(\lambda) \simeq (A,\circ)/\ker(\lambda)
		\quad \mbox{and} \quad
		\mathrm{Im}(\rho)\simeq (A,\circ)/\ker(\rho)
	\]
	have order at most $|A|$. This implies that
	\[
		\begin{cases}
			(A,\circ)\simeq \mathrm{Im}(\lambda) = \mathrm{Inn}(A,\cdot)\simeq (A,\cdot)&\mbox{if }\mathrm{Im}(\lambda) \supseteq\mathrm{Inn}(A,\cdot),\\
			(A,\circ)\simeq \mathrm{Im}(\rho) = \mathrm{Inn}(A,\cdot)\simeq (A,\cdot)&\mbox{if }\mathrm{Im}(\rho) \supseteq\mathrm{Inn}(A,\cdot),
		\end{cases}
	\]
	and we have a~contradiction in both cases.

	Observe that $J_1\neq A$ because $\mathrm{Im}(\lambda)\not\supseteq\mathrm{Inn}(A,\cdot)$, and $J_2\neq A$ because $A$ is not almost trivial. Since $J_1,J_2$ are left ideals of $A$ by Proposition~\ref{prop:ideals}, the left-simplicity of $A$ implies that $J_1 =J_2 =1$.

	Note that $\Gamma$ is an almost simple group with socle $(A,\cdot)\simeq T$ by~\eqref{eqn:Gamma}. We consider its factorization $\Gamma = \mathrm{Im}(\lambda)\mathrm{Im}(\rho)$. By~\eqref{eqn:rule out}, both of the factors are core-free, in the sense that they do not contain the socle. Moreover:
	\begin{itemize}
		\item Since $J_1 =1$, the factor $\mathrm{Im}(\lambda)$ intersects trivially with $\mathrm{Inn}(A,\cdot)$ and so it embeds into $\mathrm{Out}(A,\cdot)$. Since $\mathrm{Out}(A,\cdot)$ is solvable by Schreier Conjecture (a consequence of the Classification of Finite Simple Groups), we deduce that $\mathrm{Im}(\lambda)$ is also solvable. Also, $\mathrm{Im}(\lambda)\neq 1$ because $A$ is non-trivial.

		\item Since $J_2 =1$, the factor $\mathrm{Im}(\rho)$ is isomorphic to $(A,\circ)$.
	\end{itemize}
	Fortunately, the core-free factorizations of finite almost simple groups with a~solvable factor have been classified in~\cite[Theorem 1.1]{AS}. Put $H:= \mathrm{Im}(\lambda)$, which is solvable, and $K:= \mathrm{Im}(\rho)$. Identifying $(A,\cdot)$ with $T$, we have:
	\begin{enumerate}[(1)]
		\item $|H|\neq 1$,\, $|H\cap \mathrm{Inn}(T)|=1$;

		\item $|H|$ divides $|\mathrm{Out}(T)|$;

		\item $|K|=|T|$.
	\end{enumerate}
	From Proposition~\ref{prop:trifactor}, we also know that
	\[
		\frac{K}{K\cap \mathrm{Inn}(T)} \simeq \frac{K\mathrm{Inn}(T)}{\mathrm{Inn}(T)} =\frac{H\mathrm{Inn}(T)}{\mathrm{Inn}(T)} \simeq \frac{H}{H\cap \mathrm{Inn}(T)}.
	\]
	Together with (1) and (3), this yields the equality:
	\begin{enumerate}[(1)] \setcounter{enumi}{+3}
		\item $|T| = |H| |K\cap \mathrm{Inn}(T)|$.
	\end{enumerate}
	We now use~\cite{AS} to show that all candidates of $(H,K)$ are impossible.

	\begin{remark}
		In the following tables, the orders of certain groups are listed. For the linear groups, they can be constructed in \textsc{Magma} via
		\begin{itemize}
			\item \texttt{PSL(n,q)} for the projective special linear group on $\mathrm{GF}(q)^n$;

			\item \texttt{PSU(n,q)} for the projective special unitary group on $\mathrm{GF}(q^2 )^n$;

			\item \texttt{PSp(2n,q)} for the projective sympletic group on $\mathrm{GF}(q)^{2n}$;

			\item\texttt{PSO(2n+1,q)} for the projective special orthogonal group on $\mathrm{GF}(q)^{2n+1}$ (the symbol $\mathrm{P}\Omega_{2n+1}(q)$ denotes its derived subgroup);

			\item\texttt{PSOPlus(2n,q)} and \texttt{PSOMinus(2n,q)}, respectively, for the projective special orthogonal groups on $\mathrm{GF}(q)^{2n}$ with Witt indices $n$ and $n-1$ (the symbols $\mathrm{P}\Omega_{2n}^+ (q)$ and $\mathrm{P}\Omega_{2n}^- (q)$ denote their derived subgroups);

			\item \texttt{AGammaL(n,q)} for the affine semilinear group on $\mathrm{GF}(q)^n$;

			\item \texttt{ChevalleyGroup("G",2,q)} for the Chevalley group or exceptional group of type $G_2$ over the field $\mathrm{GF}(q)$.
		\end{itemize}
		For the non-projective linear groups, simply remove \texttt{P} in the above. For the Mathieu groups, they can be constructed in \texttt{GAP} via
		\begin{itemize}
			\item \texttt{MathieuGroup(n)} for the Mathieu group acting on $n$ objects.
		\end{itemize}
		The orders of these groups may then be computed. The orders of the outer automorphism groups $\mathrm{Out}(T)$ may be computed via
		\[
			|\mathrm{Out}(T)| = |\mathrm{Aut}(T)|/|\mathrm{Inn}(T)| = |\mathrm{Aut}(T)|/|T|.
		\]
		For the symmetric and alternating groups, their orders are obvious, so we do not list them explicitly.
	\end{remark}

	Suppose first that $K$ is also solvable. By~\cite[Proposition 4.1]{AS}, for
	\[
		(H',K') = (H,K)\quad\mbox{or}\quad (H',K') = (K,H),
	\]
	the next two tables give all the possibilities.
    \begin{table}[!ht]
        \centering
    	\begin{tabular}{|C{2cm}|C{5cm}|C{4.25cm}|}
    		\hline
    		$T$ & $|K'\cap \mathrm{Inn}(T)|$ is divisible by & $|K'\cap \mathrm{Inn}(T)|$ divides\\\hline\hline
    		$\mathrm{PSL}_2 (q)$ & $q$ & $\frac{q(q-1)}{(2,q-1)}$\\
    		\hline
    	\end{tabular}
    \end{table}
    
    \begin{longtable}{|C{2cm}|C{1.75cm}|C{3.5cm}|C{3.5cm}|}
		\hline
		$T$ & $|\mathrm{Out}(T)|$ & $|H'|$ is divisible by & $|K'|$ is divisible by\\\hline\hline
		$\mathrm{PSL}_2 (7)$ & $2$ & $7 $ & $|\mathrm{S}_4 |$\\\hline
		$\mathrm{PSL}_2 (11)$ & $2$ & $11\cdot 5 $ & $|\mathrm{A}_4 |$\\\hline
		$\mathrm{PSL}_2 (23)$& $2$ & $23\cdot 11 $& $|\mathrm{S}_4 |$\\\hline
		$\mathrm{PSL}_3 (3)$ & $2$ & $13 $ & $3^2\cdot 2\cdot |\mathrm{S}_4 |$\\\hline
		$\mathrm{PSL}_3 (3)$ & $2$ &$13\cdot 3 $& $|\mathrm{A}\Gamma\mathrm{L}_1 (9)|$\\\hline
		$\mathrm{PSL}_3 (4)$ &$12$& $7\cdot 3\cdot |\mathrm{S}_3 | $ & $2^4\cdot 3\cdot |\mathrm{D}_{10}|\cdot 2$\\\hline
		$\mathrm{PSL}_3 (8)$ & $6$ & $73\cdot 9 $ & $2^{3+6}\cdot 7^2\cdot 3 $\\\hline
		$\mathrm{PSU}_3 (8)$ & $18$ & $57\cdot 9 $& $2^{3+6}\cdot 63\cdot 3 $\\\hline
		$\mathrm{PSU}_4 (2)$& $2$ &$2^4\cdot 5$ & $3^{1+2}\cdot 2 \cdot |\mathrm{A}_4 | $\\\hline
		$\mathrm{PSU}_4 (2)$& $2$ & $2^4\cdot |\mathrm{D}_{10}|$& $3^{1+2}\cdot 2\cdot |\mathrm{A}_4 | $\\\hline
		$\mathrm{PSU}_4 (2)$& $2$&$2^4\cdot 5\cdot 4$& $3^{3}\cdot |\mathrm{S}_3 | $\\\hline
		$\mathrm{M}_{11}$ & $1$ & $11\cdot 5$ & $|\mathrm{M}_9 |\cdot 2$\\
		\hline
	\end{longtable}
    
	In the second table, observe that
	\[
		|\mathrm{A}\Gamma\mathrm{L}_1 (9)| = 144, \quad |\mathrm{D}_{10}|=10,
		\quad |\mathrm{M}_9 |=72.
	\]
	We see that neither $|H'|$ nor $|K'|$ divides $|\mathrm{Out}(T)|$, which contradicts (2).

	In the first table, we must have $(H',K') = (H,K)$ by (1). Note that
	\begin{align*}
		|T| &= |\mathrm{PSL}_2 (q)| = \frac{q(q-1)(q+1)}{(2,q-1)},\\ |\mathrm{Out}(T)| &=|\mathrm{Out}(\mathrm{PSL}_2 (q))|= (2,q-1)f,
	\end{align*}
	where $q =p^f$ and $p$ is a~prime. We then see from (2) and the third column of the table that $|H||K\cap \mathrm{Inn}(T)|$ has to divide
	\[
		(2,q-1)f \cdot \frac{q(q-1)}{(2,q-1)} = q(q-1)f,
	\]
	which is strictly smaller than $|T|$ because
	\[
		f < \frac{2^f +1}{2}\leq \frac{q+1}{(2,q-1)}.
	\]
	This yields a~contradiction to (4).

	We have shown that $K$ must be insolvable. From here, we argue depending on the type of $T$. Let us remark that $T$ cannot be an exceptional group of Lie type by~\cite[Proposition~4.2]{AS}.

	\fbox{I. $T$ is an alternating group}

	The possibilities for $(H,K)$ are given in~\cite[Proposition 4.3 (a)$\sim$(f)]{AS}. Let $T = A_m$, where $m\geq 5$, and put $\Omega_m = \{1,2,\dots,m\}$. For (a), it says $H$ is transitive on $\Omega_m$, and so $|H| \geq m$. For (b), it says $H$ is $2$-homogeneous on $\Omega_m$, namely, $H$ is transitive on the $2$-subsets of $\Omega_m$, and so $|H| \geq \frac{m(m-1)}{2}$. For (c) and (f), there is a~list of candidates for $H$, and we see that $|H|\geq 5$. Since $|\mathrm{Out}(T)|\in \{2,4\}$, we obtain a~contradiction to (2) in these cases. For both (d) and (e), it states $T = A_6$ and $K\in \{\mathrm{PSL}_2 (5),\mathrm{PGL}_2 (5)\}$. But then this yields $|T| = 360$ while $|K|\in \{60,120\}$, and this contradicts (3).

	\fbox{II. $T$ is a~sporadic group}

	The possibilities for $(H,K)$ are given in~\cite[Proposition 4.4 (a)$\sim$(c)]{AS}. For (a) and (b), it says $(T,K) =( \mathrm{M}_{12},\mathrm{M}_{11}),  (\mathrm{M}_{24},\mathrm{M}_{23})$, respectively, which contradicts (3). For (c), a~list of candidates for $H$ is given, and we find that $|H|\geq 3$. Since $|\mathrm{Out}(T)|\in \{1,2\}$, this contradicts (2).

	\fbox{III. $T$ is a~classical group of Lie type and is not an alternating group}

	The possibilities for $(H,K)$ are given in~\cite[Tables 1.1 and 1.2]{AS}. However, they only give an upper bound for $H\cap \mathrm{Inn}(T)$, which is not enough for us. For~\cite[Table 1.1]{AS}, we shall further apply~\cite[Theorem 1]{factor}, which provides a~lower bound for $|H|$ and we can derive a~contradiction to (2). For~\cite[Table 1.2]{AS}, we shall use the description of $K\cap\mathrm{Inn}(T)$ and obtain a~contradiction to (4). We summarize all the possibilities in the two tables below.
	\begin{longtable}{|C{0.45cm}||C{5.25cm}|C{5.5cm}|}
		\hline
		&$T$ & $|H|$ is divisible by\\\hline\hline
		(a) &$\mathrm{PSL}_n (q)$, $n\geq 2$ & $\frac{q^n -1}{q-1}$ \\\hline
		(b) & $\mathrm{PSL}_4 (q)$ & $\frac{q^3 (q^3 -1)}{(2,q-1)}$\\
		\hline
		(c) &$\mathrm{PSp}_{2m}(q)$, $m\geq 2$, $q$ even & $q^m (q^m -1)$ \\\hline
		(d) & $\mathrm{PSp}_4 (q)$, $q$ even & $q^2 (q^2 -1)$\\\hline
		(e) & $\mathrm{PSp}_4 (q)$, $q$ odd & $q^3 (q^2 -1)$\\
		\hline
		(f) &$\mathrm{PSU}_{2m}(q)$, $m\geq 2$ & $\frac{q^{2m}(q^{2m}-1)}{q+1}$ \\\hline
		(g) &$\Omega_{2m+1}(q)$, $m\geq 3$, $q$ odd & $\frac{1}{2}q^{m(m+1)/2}(q^m -1)$ \\\hline
		(h) &$\mathrm{P}\Omega^+_{2m}(q)$, $m\geq 5$ & $\frac{q^m (q^m -1)}{(2,q-1)}$ \\\hline
		(i) & $\mathrm{P}\Omega_8^+ (q)$ & $\frac{q^4 (q^4 -1)}{(2,q-1)}$\\\hline
	\end{longtable}

	\begin{longtable}{|C{2cm}|C{3.5cm}|C{1.5cm}|C{4cm}|}
		\hline
		$T$ & $|T|$ &$|\mathrm{Out}(T)|$ & $|K\cap \mathrm{Inn}(T)|$ divides \\\hline\hline
		$\mathrm{PSL}_2 (11)$ & $660$ & $2$ &$|\mathrm{A}_5 |$ \\\hline
		$\mathrm{PSL}_2 (16)$ & $4080$ & $4$ &$|\mathrm{A}_5 |$ \\\hline
		$\mathrm{PSL}_2 (19)$ & $3420$ &$2$ &$|\mathrm{A}_5 |$ \\\hline
		$\mathrm{PSL}_2 (29)$ & $12180$ & $2$ &$|\mathrm{A}_5 |$ \\\hline
		$\mathrm{PSL}_2 (59)$ & $102660$& $2$ &$|\mathrm{A}_5 |$ \\\hline
		$\mathrm{PSL}_4 (3)$ & $6065280$ &$4$&$3^3\cdot |\mathrm{PSL}_{3}(3)|$\\\hline
		$\mathrm{PSL}_4 (3)$ & $6065280$&$4$& $4\cdot |\mathrm{PSL}_2 (9)|\cdot 2$ \\\hline
		$\mathrm{PSL}_4 (4)$ & $987033600$ &$4$& $5\cdot |\mathrm{PSL}_2 (16)|\cdot 2$ \\\hline
		$\mathrm{PSL}_5 (2)$ & $9999360$&$2$& $2^6\cdot |\mathrm{S}_3 |\cdot |\mathrm{PSL}_3 (2)|$\\\hline
		$\mathrm{PSp}_4 (3)$ & $25920$&$2$& $2^4\cdot |\mathrm{A}_5 |$\\\hline
		$\mathrm{PSp}_4 (3)$&$25920$&$2$& $2^4\cdot |\mathrm{S}_6 |$ \\\hline
		$\mathrm{PSp}_4 (5)$& $4680000$&$2$& $|\mathrm{PSL}_{2}(5^2 )|\cdot 2$ \\\hline
		$\mathrm{PSp}_4 (7)$&$138297600$ &$2$& $|\mathrm{PSL}_{2}(7^2 )|\cdot 2$ \\\hline
		$\mathrm{PSp}_4 (11)$&$12860654400$ &$2$& $|\mathrm{PSL}_{2}(11^2 )|\cdot 2$ \\\hline
		$\mathrm{PSp}_4 (23)$&$20674026236160$& $2$& $|\mathrm{PSL}_{2}(23^2 )|\cdot 2$ \\\hline
		$\mathrm{Sp}_6 (2)$&$1451520$&$1$&$|\mathrm{S}_8 |$ \\\hline
		$\mathrm{PSp}_6 (3)$&$4585351680$&$2$&$|\mathrm{PSL}_{2}(27)|\cdot 3$\\
		\hline
		$\mathrm{PSU}_3 (3)$&$6048$&$2$&$|\mathrm{PSL}_{2}(7)|$\\\hline
		$\mathrm{PSU}_3 (5)$&$126000$&$6$&$|\mathrm{A}_7 |$\\\hline
		$\mathrm{PSU}_4 (3)$&$3265920$&$8$&$|\mathrm{PSL}_3 (4)|$\\\hline
		$\mathrm{PSU}_4 (8)$&$34693789777920$&$6$&$2^{12}\cdot|\mathrm{SL}_2 (64)|\cdot 7$\\\hline
		$\Omega_7 (3)$&$4585351680$&$2$&$|\mathrm{G}_2 (3)|$\\\hline
		$\Omega_7 (3)$&$4585351680$&$2$&$|\mathrm{Sp}_6 (2)|$\\\hline
		$\Omega_9 (3)$&$65784756654489600$&$2$&$|\Omega_8^- (3)|\cdot 2$\\\hline
		$\Omega_8^+ (2)$&$174182400$&$6$&$|\mathrm{Sp}_6 (2)|$\\\hline
		$\Omega_8^+ (2)$&$174182400$&$6$&$|\mathrm{A}_9 |$\\\hline
		$\mathrm{P}\Omega_8^+ (3)$&$4952179814400$&$24$&$|\Omega_7 (3)|$\\\hline
		$\mathrm{P}\Omega_8^+ (3)$&$4952179814400$&$24$&$|\Omega_8^+ (2)|$\\\hline
	\end{longtable}
	In the second table, let $\mathcal{K}$ be the number in the last column. We deduce from (2) that $|H||K\cap \mathrm{Inn}(T)|$ has to divide $|\mathrm{Out}(T)|\mathcal{K}$. Note that:
	\begin{longtable}{|C{2cm}|C{3.45cm}||C{2cm}|C{3.45cm}|}
		\hline
		$|\mathrm{PSL}_{3}(3)|$&$5616$ &$|\mathrm{PSL}_{2}(7)|$&$168$\\\hline$|\mathrm{PSL}_2 (9)|$ & $360$ &$|\mathrm{PSL}_3 (4)|$&$20160$\\\hline
		$|\mathrm{PSL}_2 (16)|$ & $4080$ &$|\mathrm{SL}_2 (64)|$&$262080$\\\hline$|\mathrm{PSL}_3 (2)|$&$168$&$|\mathrm{G}_2 (3)|$&$4245696$\\\hline
		$|\mathrm{PSL}_{2}(5^2 )|$&$7800$ &$|\mathrm{Sp}_6 (2)|$&$1451520$\\\hline$|\mathrm{PSL}_{2}(7^2 )|$&$58800$&$|\Omega_8^- (3)|$&$10151968619520$\\\hline
		$|\mathrm{PSL}_{2}(11^2 )|$&$885720$&
		$|\Omega_7 (3)|$&$4585351680$\\\hline$|\mathrm{PSL}_{2}(23^2 )|$&$74017680$&$|\Omega_8^+ (2)|$&$174182400$\\\hline
		$|\mathrm{PSL}_{2}(27)|$&$9828$&&\\
		\hline
	\end{longtable}
	It is then straightforward to check that
	\[
		|\mathrm{Out}(T)|\mathcal{K} < |T|
	\]
	in every row of the second table. This yields a~contradiction to (4).

	In the first table, write $q=p^f$, where $p$ is a~prime. As listed in~\cite[Table 2.1]{AS}, for example, we know that:
	\begin{longtable}{|C{6.5cm}|C{4.5cm}|}
		\hline
		$T$ & $|\mathrm{Out}(T)|$\\
		\hline\hline
		$\mathrm{PSL}_2 (q)$ & $(2,q-1)\cdot f$\\\hline
		$\mathrm{PSL}_n (q)$, $n\geq 3$ & $(n,q-1)\cdot 2\cdot f$\\\hline
		$\mathrm{PSU}_n (q)$, $n\geq 3$ & $(n,q+1)\cdot 2f$\\\hline
		$\mathrm{PSp}_{2m}(q)$, $(m,p)\neq (2,2)$ & $(2,q-1)\cdot f$\\\hline
		$\mathrm{PSp}_4 (q)$, $q$ even & $2f$\\\hline
		$\Omega_{2m+1}(q)$, $m\geq 3$, $q$ odd & $2\cdot f$\\\hline
		$\mathrm{P}\Omega_{2m}^+ (q)$, $m\geq 5$, $q^m\not\equiv 1\pmod{4}$ & $2\cdot (2,q-1)\cdot f$\\\hline
		$\mathrm{P}\Omega_{2m}^+ (q)$, $m\geq 5$, $q^m\equiv 1\pmod{4}$ & $8\cdot f$\\\hline
		$\mathrm{P}\Omega_{8}^+ (q)$& $ (2+(2,q-1))! \cdot f$\\\hline
	\end{longtable}
	For an integer $z$, write $v_p (z)$ for the $p$-adic valuation of $z$. Using the trivial bound that $v_p (2)\leq 1$ when $2$ divides $|\mathrm{Out}(T)|$, we see that:
	\begin{longtable}{|C{0.45cm}||C{5.25cm}|C{5.5cm}|}
		\hline
		&$v_p (|\mathrm{Out}(T)|)$ is at most & $v_p (|H|)$ is at least\\\hline\hline
		(b) & $1+v_p (f)$ & $3f$\\
		\hline
		(c) &$1+v_p (f)$, $p=2$ & $mf$, $m\geq 2$ \\\hline
		(d) & $1+v_p (f)$, $p=2$ & $2f$\\\hline
		(e) & $v_p (f)$, $p\geq 3$ & $3f$\\
		\hline
		(f) &$1+v_p (f)$ & $2mf$, $m\geq 2$\\\hline
		(g) &$v_p (f)$, $p\geq 3$ & $\frac{m(m+1)}{2}f$, $m\geq 3$ \\\hline
		(h) &$3+v_p (f)$ & $mf$, $m\geq 5$ \\\hline
		(i) & $3+v_p (f)$ & $4f$\\\hline
	\end{longtable}
	Since $v_p (f) \leq \log_p (f) <f$ and $1\leq f$, in cases (b)$\sim$(i), we deduce that
	\[
		v_p (|\mathrm{Out}(T)|) < v_p (|H|).
	\]
	But then $|H|$ cannot divide $|\mathrm{Out}(T)|$, and this contradicts (2).

	Finally, we deal with case (a). If $n=2$, then
	\[
		|\mathrm{Out}(T)| \leq 2f < 2^f +1 \leq p^f +1 = \frac{q^2 -1}{q-1} \leq |H|,
	\]
	which contradicts (2). If $n\geq 3$ with $(fn,p) = (6,2)$, then
	\[
		\begin{cases}
			|\mathrm{Out}(T)| = 2 < \frac{2^6 -1}{2-1} \leq |H|&\mbox{for }(f,n)=(1,6),\, q=2,\\
			|\mathrm{Out}(T)| = 12 < \frac{4^3 -1}{4-1} \leq |H| & \mbox{for }(f,n)=(2,3),\, q=4,
		\end{cases}
	\]
	which again contradicts (2). If $n\geq 3$ with $(fn,p)\neq (6,2)$, then we deduce from Zsigmondy's theorem~\cite{Zsigmondy} (also see~\cite[Theorem 3]{Roitman}) that $p^{fn}-1$ has a~primitive prime divisor $\ell$. In other words, the prime $\ell$ is such that
	\begin{equation}\label{eqn:divisible}
		\ell \mid p^{fn}-1,\quad \ell\nmid p^k -1\mbox{ for all }1\leq k\leq fn-1.
	\end{equation}
	It follows that $\ell$ divides $\frac{q^n -1}{q-1}$ and hence $|H|$. By (2), we see that $\ell$ divides
	\[
		|\mathrm{Out}(T)|=(n,q-1)\cdot 2\cdot f.
	\]
	Since $\ell\nmid q-1$ and clearly $\ell\neq 2$, this means that $\ell\mid f$. But $p\equiv p^\ell \pmod{\ell}$ so we deduce from the first condition in~\eqref{eqn:divisible} that
	\[
		p^{\frac{f}{\ell}n}-1 \equiv p^{fn}-1\equiv 0\pmod{\ell}.
	\]
	This contradicts the second condition in~\eqref{eqn:divisible}.

	We have now ruled out every possibility. Therefore, the assumption that $A$ is not almost trivial is false. This completes the proof of Theorem~\ref{thm2}(a).

	\subsection{The case when $n\geq 2$.}

	Suppose that $A$ is left-simple. Note that
	\begin{equation}\label{eqn:lambdarho}
		\lambda_a\equiv \rho_a\pmod{\mathrm{Inn}(A,\cdot)}
	\end{equation}
	for all $a\in A$ by~\eqref{eqn:lambda}, and that
	\begin{equation}\label{eqn:lambda Inn}
	    \mathrm{Im}(\lambda)\not\subseteq \mathrm{Inn}(A,\cdot),
	\end{equation}
	for otherwise $T\times 1\times \cdots\times 1$ would be a~non-trivial proper left ideal of $A$. We then see that $J_1\neq A$ by order consideration. Indeed, if $J_1 =A$, namely the inclusion $\mathrm{Inn}(A,\cdot)\subseteq \mathrm{Im}(\lambda)$ holds, then we must in fact have an equality because $\mathrm{Inn}(A,\cdot)\simeq (A,\cdot)$ has order $|A|$, while
	\[
		\mathrm{Im}(\lambda) \simeq (A,\circ)/\ker(\lambda)
	\]
	has order at most $|A|$. But this contradicts~\eqref{eqn:lambda Inn}. From~\eqref{eqn:lambdarho} and~\eqref{eqn:lambda Inn}, we also obtain $J_2\neq A$. Since $J_1,J_2$ are left ideals of $A$ by Proposition~\ref{prop:ideals}, the left-simplicity of $A$ implies that $J_1 =J_2 =1$.

	Since $J_1 =1$, the image $\mathrm{Im}(\lambda)$ of $\lambda$ intersects trivially with $\mathrm{Inn}(A,\cdot)$, as claimed in the theorem.

	Since $J_2 =1$, the multiplicative group $(A,\circ)$ embeds into
	\[
		\mathrm{Aut}(A,\cdot)\simeq \mathrm{Aut}(T)^n\rtimes S_n
	\]
	via $\rho$, and let $P$ denote the projection of $\mathrm{Im}(\rho)$ onto $S_n$. Since
	\[
		\mathrm{Inn}(A,\cdot) \simeq \mathrm{Inn}(T)^n \subseteq \mathrm{Aut}(T)^n,
	\]
	the projection of $\mathrm{Im}(\lambda)$ onto $S_n$ is also equal to $P$ by~\eqref{eqn:lambdarho}. For any orbit $O$ of the natural action of $P$ on $\{1,2,\dots,n\}$, the subgroup of $(A,\cdot)$ given by
	\[
		\prod_{i\in O}T_i,
	\]
	where $T_i$ denotes the $i$th copy of $T$ in $(A,\cdot)\simeq T^n$, is then a~left ideal of $A$. But $A$ is left-simple, so necessarily $O=\{1,2,\dots,n\}$. This proves that $P$ is a~transitive subgroup of $S_n$.

	This completes the proof of Theorem~\ref{thm2}(b).

	\begin{remark}
		The condition that $(A,\circ)$ is isomorphic to a~subgroup, $X$ say, of $\mathrm{Aut}(T^n )$ whose projection onto $S_n$ is transitive is very restrictive. To see why, consider the natural homomorphism
		\[
			\phi: X \lhook\joinrel\xrightarrow{\quad} \mathrm{Aut}(T)^n\rtimes S_n\longrightarrow \mathrm{Out}(T)^n\rtimes S_n.
		\]
		Its kernel $\ker(\phi)$ lies in $\mathrm{Inn}(T)^n \simeq T^n$, and its image $\mathrm{Im}(\phi) \simeq X/\ker(\phi)$ has the same projection onto $S_n$ as $X$, which is transitive. It follows that
		\begin{equation}
			\exists R\leq T^n\mbox{ s.t. }n \mid [T^n:R]\mbox{\,\ and\,\ }[T^n:R]\mid |\mathrm{Out}(T)|^n\cdot n!.
		\end{equation}
		In particular, the prime factors of $n$ must all divide $|T|$.

		For example, take $T = (T,\cdot) = (A_5,\cdot)$. Note that
		\[
			|A_5 | = 60 = 2^2 \cdot 3 \cdot 5\quad\mbox{and}\quad|\mathrm{Out}(A_5 )|=2.
		\]
		We see that $(A_5^n,\cdot,\circ)$ can be left-simple only when $n=2^{x}3^y 5^z$, where $x,y,z\in \mathbb{N}_{\geq 0}$. Moreover, a~skew brace $(A_5^2,\cdot,\circ)$ cannot be left-simple because $A_5^2$ has no subgroup of index $2,4$, or $8$.
	\end{remark}

	\section{Connection with minimal Hopf--Galois structures}\label{sec:minimal}

	Let $L/K$ be a~finite extension of fields. A \textit{Hopf--Galois structure} on $L/K$ is a~finite-dimensional cocommutative Hopf $K$-algebra $\mathcal{H}$ that acts on $L$ via a~$K$-algebra homomorphism $\mathcal{H}\longrightarrow\mathrm{End}_K (L)$ such that the action respects the counit $\epsilon$, multiplication $m$, and comultiplication $\Delta$, in the sense that
	\[
		h(1) = \epsilon(h)(1),\quad h(xy) = m(\Delta(h)(x\otimes y))
	\]
	for all $h\in \mathcal{H},\, x,y\in L$, and the natural $K$-homomorphism
	\[
		L\otimes_K\mathcal{H}\longrightarrow \mathrm{End}_K (L)
	\]
	is bijective. The notion of a~Hopf--Galois structure was introduced by~\cite{CS} (or see~\cite{Childs-book} for detailed overview). For each Hopf $K$-subalgebra $\mathcal{H}'$ of $\mathcal{H}$, let
	\[
		L^{\mathcal{H}'} = \{x\in L: h'(x) = \epsilon(h')x \mbox{ for all }h'\in\mathcal{H}'\}
	\]
	denote its fixed field. This yields a~Hopf--Galois correspondence
	\begin{align*}
		\Phi_{\mathcal{H}}: \{\mbox{Hopf $K$-subalgebras of $\mathcal{H}$}\}& \longrightarrow \{\mbox{intermediate fields of $L/K$}\};\\
		\mathcal{H'}&\longmapsto L^{\mathcal{H}'}
	\end{align*}
	that is analogous to the Galois correspondence in Galois theory. Indeed, in the case that $L/K$ is a~$G$-Galois extension, the group ring $K[G]$ is naturally a~Hopf--Galois structure on $L/K$, and the map $\Phi_{K[G]}$ above reduces to the usual Galois correspondence
	\begin{align*}
		\{\mbox{subgroups of $G$}\}& \longrightarrow \{\mbox{intermediate fields of $L/K$}\};\\
		G'&\longmapsto L^{G'}
	\end{align*}
	because the Hopf $K$-subalgebras of $K[G]$ are exactly the group rings $K[G']$, where $G'$ ranges over the subgroups of $G$.

	It is known by~\cite{CS} that the map $\Phi_{\mathcal{H}}$ is always injective, but it need not be surjective in general. We can of course ask when $\Phi_{\mathcal{H}}$ is surjective. Here, we consider the other extreme and ask when $\Phi_{\mathcal{H}}$ is as far from being surjective as possible, i.e. when $\mathrm{Im}(\Phi_{\mathcal{H}})$ only contains $L$ and $K$. Since $\Phi_{\mathcal{H}}$ is injective, this is precisely the case when $K$ and $\mathcal{H}$ are the only Hopf $K$-subalgebras of $\mathcal{H}$. The next definition is due to~\cite{minimal}.

	\begin{definition}
		A Hopf--Galois structure $\mathcal{H}$ on $L/K$ is said to be \textit{minimal} if $\dim_K (\mathcal{H})\neq 1$ and $\mathcal{H}$ has no Hopf $K$-subalgebras other than $K$ and $\mathcal{H}$.
	\end{definition}

	The theory of Hopf--Galois structures applies to all extensions. But when restricted to separable extensions, we have a~very nice group-theoretic classification of Hopf--Galois structures thanks to~\cite{GP}. In addition, when further restricted to Galois extensions, the classification may be translated into the language of skew braces. The next theorem is from~\cite[Theorem 3.1]{ST}.

	\begin{theorem}
		Let $L/K$ be any finite Galois extension of fields with Galois group $(G,\circ)$. There is a~bijective correspondence between$:$
		\begin{enumerate}[$(1)$]
			\item the Hopf--Galois structures $\mathcal{H}$ on $L/K;$
			\item the binary operations $\cdot$ on $G$ such that $(G,\cdot,\circ)$ is a~skew brace.
		\end{enumerate}
		Specifically, such an operation $\cdot$ is associated to the Hopf--Galois structure
		\[
			L[(G,\cdot)]^{(G,\circ)} = \left\{ \sum_{\tau\in G}\ell_\tau\tau \in L[(G,\cdot)]:
			\sigma(\ell_{\tau}) = \ell_{\lambda_\sigma(\tau)} \mbox{ for all }\sigma,\tau\in G\right\}
		\]
		whose action on $L$ is defined by
		\[
			\left(\sum_{\tau\in G}\ell_\tau \tau \right)(x) = \sum_{\tau\in G}\ell_\tau\tau(x)
		\]
		for all $x\in L$, where $\lambda$ is the lambda map~\eqref{eqn:lambda map} of the skew brace $(G,\cdot,\circ)$.
	\end{theorem}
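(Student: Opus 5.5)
The plan is to deduce this from the Greither--Pareigis classification~\cite{GP} by a change of language. No new group theory is needed; the content lies in matching the two descriptions correctly.

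\textbf{Step 1: the Greither--Pareigis description.} Let $L/K$ have Galois group $G=(G,\circ)$, and let $\mathrm{Perm}(G)$ be the symmetric group on the underlying set of $G$. Write $\mathcal{L}(\sigma)(x)=\sigma\circ x$ for left translation. By~\cite{GP}, Hopf--Galois structures on $L/K$ correspond bijectively to regular subgroups $N\le\mathrm{Perm}(G)$ normalized by $\mathcal{L}(G)$. The structure attached to $N$ is the fixed algebra $L[N]^{G}$, where $G$ acts on $L$ as Galois group and on $N$ by conjugation through $\mathcal{L}$. It acts on $L$ via $\bigl(\sum_\eta \ell_\eta\eta\bigr)(x)=\sum_\eta\ell_\eta\,\eta^{-1}(1)(x)$, or a similar formula depending on the convention chosen. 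I will fix the convention in which this reduces to the formula in the statement.

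\textbf{Step 2: regular subgroups versus operations $\cdot$.} Given a regular $N$, the evaluation map $N\to G$, $\eta\mapsto\eta(1)$, is a bijection. Transport the group law of $N$ along it, so that $\eta_a\eta_b=\eta_{a\cdot b}$, where $\eta_a$ is the unique element with $\eta_a(1)=a$; equivalently $\eta_a(b)=a\cdot b$. Conversely, from any group operation $\cdot$ we obtain the regular subgroup $N_\cdot=\{x\mapsto a\cdot x\}$. These two constructions are mutually inverse. The key computation is
\[
\mathcal{L}(\sigma)\,\eta_a\,\mathcal{L}(\sigma)^{-1}(x)=\sigma\circ\bigl(a\cdot(\overline{\sigma}\circ x)\bigr).
\]
Set $y=\overline{\sigma}\circ x$. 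Then $\mathcal{L}(G)$ normalizes $N_\cdot$ if and only if, for all $\sigma,a$, there is some $c$ with $\sigma\circ(a\cdot y)=c\cdot(\sigma\circ y)$ for every $y$. Putting $y=1$ forces $c=(\sigma\circ a)\cdot\sigma^{-1}$. The condition then becomes exactly the brace relation, up to a routine rearrangement. This yields the bijection between (1) and (2).

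\textbf{Step 3: identifying the Hopf algebra.} With the brace relation in hand, the conjugation action of $\mathcal{L}(\sigma)$ sends $\eta_\tau$ to $\eta_{(\sigma\circ\tau)\cdot\sigma^{-1}}$, that is, to $\eta_{\rho_\sigma(\tau)}$. The statement, however, uses $\lambda_\sigma(\tau)=\sigma^{-1}\cdot(\sigma\circ\tau)$. To reconcile these, I will instead use the opposite embedding: identify $N$ with $(G,\cdot)$ through the right-regular representation, or equivalently compose with the inversion anti-automorphism. Under that identification, conjugation by $\mathcal{L}(\sigma)$ becomes $\lambda_\sigma$. Then $\sum\ell_\tau\tau$ is $G$-fixed if and only if $\sigma(\ell_\tau)=\ell_{\lambda_\sigma(\tau)}$ for all $\sigma,\tau$, which is exactly the displayed description of $L[(G,\cdot)]^{(G,\circ)}$. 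Transporting the Greither--Pareigis action along the same identification gives $\tau\cdot x=\tau(x)$, the Galois action.

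\textbf{Main obstacle.} The hard part is this bookkeeping of conventions: left versus right regular representation, $\lambda$ versus $\rho$, and $\eta$ versus $\eta^{-1}$. It must be arranged so that both the fixed-point condition and the action formula come out exactly as stated, while the correspondence remains bijective. I would first write everything using the opposite group of $N$ and check the case $\cdot=\circ$. There $\lambda$ is trivial and the structure should be the classical $K[G]$ acting through Galois. Then I would fix any remaining sign or inverse so that this test case matches.
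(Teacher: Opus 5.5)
The paper does not prove this statement; it quotes it from \cite[Theorem 3.1]{ST}. The natural benchmark is therefore the standard Greither--Pareigis translation, which is what you set up. Steps 1 and 2 are correct, but Step 3 contains a genuine error that your closing ``bookkeeping'' remark leaves unresolved.

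The claim that one may ``equivalently compose with the inversion anti-automorphism'' fails if it means relabelling the subgroup $N_\cdot=\{x\mapsto a\cdot x\}$ of Step 2 by $\eta_a\mapsto a^{-1}$. Since $\rho_\sigma\in\mathrm{Aut}(G,\cdot)$, we have $\rho_\sigma(\tau^{-1})=\rho_\sigma(\tau)^{-1}$, so the twist stays $\rho_\sigma$. Moreover, the relabelling is an anti-isomorphism, so $L[N_\cdot]$ becomes $L[(G,\cdot^{\mathrm{op}})]$, not $L[(G,\cdot)]$.

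In fact no relabelling of $N_\cdot$ can produce the statement. Take $\cdot=\circ$ with $G$ non-abelian: then $N_\circ=\mathcal{L}(G)$ gives the canonical non-classical structure, whereas the displayed formula (with $\lambda_\sigma=\mathrm{id}$) gives $K[G]$. What your computation actually shows is that $L[N_\cdot]^G$ is the algebra the statement attaches to the opposite brace $(G,\cdot^{\mathrm{op}},\circ)$, whose lambda map is $\rho$. So the pairing $\cdot\leftrightarrow N_\cdot$ from Step 2 is not the pairing in the theorem. The regular subgroup itself must change, not just its identification with $(G,\cdot)$.

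The repair is to attach to $\cdot$ the subgroup of right translations $N^{r}_\cdot=\{r_a\colon x\mapsto x\cdot a\}$. If that is what you meant by ``the right-regular representation'', the idea is right, but Step 2 must then be redone for this subgroup:
\begin{enumerate}[(i)]
\item Evaluating $\mathcal{L}(\sigma)r_a\mathcal{L}(\sigma)^{-1}=r_c$ at $x=\sigma$ forces $c=\sigma^{-1}\cdot(\sigma\circ a)=\lambda_\sigma(a)$. The identity for all $x$ is then exactly the brace relation.
\item The inverse map sends a regular $N$ to the operation transported along the bijection $\eta\mapsto\eta^{-1}(1)$. Explicitly, $a\cdot b=\nu_b(a)$, where $\nu_b\in N$ satisfies $\nu_b(1)=b$; this gives $N=N^r_\cdot$.
\item The map $\tau\mapsto r_{\tau^{-1}}$ is a group isomorphism $(G,\cdot)\to N^r_\cdot$. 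Under it, conjugation by $\mathcal{L}(\sigma)$ becomes $\lambda_\sigma$, and $r_{\tau^{-1}}^{-1}(1)=\tau$. This yields both the fixed-point condition and the action formula exactly as stated.
\end{enumerate}
Alternatively, you can keep Step 2 and compose it with $\cdot\mapsto\cdot^{\mathrm{op}}$. In that case you must also justify that the opposite of a skew brace is a skew brace. For instance, $N^r_\cdot$ is the centralizer of $N_\cdot$ in $\mathrm{Perm}(G)$, so it has the same normalizer.
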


	Let us remark that the Hopf--Galois structure corresponding to the trivial skew brace is the group ring $K[G]$ and is called the \textit{classical structure}, while the Hopf--Galois structure corresponding to the almost trivial skew brace is called the \textit{canonical non-classical structure}.

	The next theorem is from~\cite[Corollary 4.1]{ST}.

	\begin{theorem}
		Let $L/K$ be any finite Galois extension of fields with Galois group $(G,\circ)$. Let $\cdot$ be any binary operation on $G$ such that $(G,\cdot,\circ)$ is a~skew brace and let $\mathcal{H}$ be the associated Hopf--Galois structure on $L/K$. For any subgroup $I$ of $G$, the following are equivalent$:$
		\begin{enumerate}[$(1)$]
			\item the intermediate field $L^I$ lies in the image of $\Phi_{\mathcal{H}};$
			\item the subgroup $I$ is a~left ideal of $(G,\cdot,\circ)$.
		\end{enumerate}
		In particular, the following are equivalent$:$
		\begin{enumerate}[$(a)$]
			\item the Hopf--Galois structure $\mathcal{H}$ is minimal$;$
			\item the skew brace $(G,\cdot,\circ)$ is left-simple.
		\end{enumerate}
	\end{theorem}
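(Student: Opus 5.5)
The plan is to reduce everything to Galois descent along $L/K$, using the explicit description of $\mathcal{H}=L[(G,\cdot)]^{(G,\circ)}$ given in the preceding theorem. Write $N=(G,\cdot)$. The group $(G,\circ)$ acts semilinearly on $L[N]$, acting on coefficients via the Galois action and on basis elements via $\lambda$. By Galois descent, $L\otimes_K \mathcal{H}\cong L[N]$ as Hopf $L$-algebras. Moreover, the Hopf $K$-subalgebras $\mathcal{H}'\subseteq \mathcal{H}$ correspond bijectively to the $(G,\circ)$-stable Hopf $L$-subalgebras of $L[N]$, via $\mathcal{H}'\mapsto L\otimes_K\mathcal{H}'$ and its inverse $\mathcal{A}\mapsto \mathcal{A}^{(G,\circ)}$.

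The Hopf $L$-subalgebras of the group algebra $L[N]$ are exactly the group algebras $L[N']$ with $N'\leq N$. This is standard: the grouplike elements of a Hopf subalgebra of a group algebra span it, by cocommutativity and pointedness. Such an $L[N']$ is $(G,\circ)$-stable precisely when $\lambda_\sigma(N')\subseteq N'$ for all $\sigma$, that is, when $N'$ is a left ideal. Hence the Hopf $K$-subalgebras of $\mathcal{H}$ are exactly the $L[I]^{(G,\circ)}$ with $I$ a left ideal. I expect this step to be the main obstacle: one has to be careful that the descent respects the Hopf structure and that stability of $L[N']$ is exactly $\lambda$-invariance, the coefficients being irrelevant.

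Next I would compute fixed fields. For $\mathcal{H}'=L[I]^{(G,\circ)}$, the defining condition $h'(x)=\epsilon(h')x$ is $K$-linear in $h'$. The action $L\otimes_K\mathcal{H}\to \mathrm{End}_K(L)$ is given by $\ell\otimes h\mapsto \ell\, h(\cdot)$, so the condition can be tested after extending scalars to $L\otimes_K\mathcal{H}'=L[I]$. There it says $\tau(x)=x$ for every $\tau\in I$, where $\tau$ acts as the Galois automorphism. Hence $L^{\mathcal{H}'}=L^{I}$, with $I$ regarded as a subgroup of $(G,\circ)$; recall that a left ideal is also a subgroup of $(G,\circ)$. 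So $\mathrm{Im}(\Phi_{\mathcal{H}})=\{L^{I}: I \text{ a left ideal}\}$.

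Finally I would deduce the two equivalences.
\begin{enumerate}[(i)]
\item For a subgroup $I$ of $(G,\circ)$, $L^I$ lies in the image if and only if $L^I=L^{I'}$ for some left ideal $I'$. By the usual Galois correspondence this holds if and only if $I=I'$, that is, if and only if $I$ is a left ideal. This gives (1)$\Leftrightarrow$(2).
\item Since $\Phi_{\mathcal{H}}$ is injective and $\dim_K\mathcal{H}=|G|$, the structure $\mathcal{H}$ is minimal exactly when $G\neq 1$ and the only left ideals are $1$ and $G$, corresponding to $K$ and $\mathcal{H}$. This is left-simplicity, giving (a)$\Leftrightarrow$(b).
\end{enumerate}
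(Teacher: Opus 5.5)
Your argument is correct. The paper gives no proof of this statement; it quotes it from Stefanello--Trappeniers \cite[Corollary~4.1]{ST}. There it follows from their reformulated bijection together with the standard Galois-descent description of the Hopf subalgebras of $L[N]^{G}$ as the $L[N']^{G}$ with $N'$ a $G$-stable subgroup of $N$.

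Your proof reconstructs this mechanism from scratch, using only the explicit formula for $\mathcal{H}=L[(G,\cdot)]^{(G,\circ)}$ from the preceding theorem:
\begin{enumerate}[(i)]
\item descent identifies Hopf $K$-subalgebras of $\mathcal{H}$ with $(G,\circ)$-stable Hopf $L$-subalgebras of $L[N]$;
\item stability of $L[N']$ is visibly $\lambda$-invariance of $N'$;
\item base change to $L[I]$ gives $L^{\mathcal{H}'}=L^{I}$.
\end{enumerate}
What you gain over the citation is an explicit bijection $I\mapsto L[I]^{(G,\circ)}$ between left ideals and Hopf subalgebras, with $\dim_K L[I]^{(G,\circ)}=|I|$. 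This makes (a)$\Leftrightarrow$(b) immediate; the appeal to injectivity of $\Phi_{\mathcal{H}}$ there is not actually needed.

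Two small points to tighten.

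\textbf{Why Hopf subalgebras of $L[N]$ are group algebras.} The reason is not ``cocommutativity and pointedness'': a cocommutative pointed Hopf algebra can have nonzero primitives, e.g.\ an enveloping algebra. What matters is that $L[N]$ is spanned by grouplikes. Concretely, take $c=\sum_g a_g g$ in a subcoalgebra $C$. Applying $g^{*}\otimes\mathrm{id}$ to $\Delta(c)=\sum_g a_g\, g\otimes g\in C\otimes C$ gives $a_g g\in C$. So $C$ is spanned by the grouplikes it contains. These form a submonoid of the finite group $N$, hence a subgroup.

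\textbf{Why the descended subspaces are Hopf subalgebras.} You flagged this step as the obstacle but did not carry it out. For a stable subspace $W\subseteq L[N]$, descent gives $W=L\otimes_K W^{(G,\circ)}$. From this, $(W\otimes_L W)^{(G,\circ)}=W^{(G,\circ)}\otimes_K W^{(G,\circ)}$, which is exactly what is needed for $\Delta$ to restrict to $W^{(G,\circ)}$. The remaining structure maps of $L[N]$ commute with the semilinear action because each $\lambda_\sigma$ is a group automorphism of $N$.
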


	Therefore, in the language of Hopf--Galois theory, our Theorem~\ref{thm1}, Corollary~\ref{cor}, and Theorem~\ref{thm2} give a~partial classification of the minimal Hopf--Galois structures on finite Galois extensions of fields.

    
\end{document}